\DeclareMathOperator{\scol}{scol}
\DeclareMathOperator{\wcol}{wcol}
\DeclareMathOperator{\tw}{tw}
\DeclareMathOperator{\cw}{copwidth}
\DeclareMathOperator{\fw}{flipwidth}
\crefname{lem}{Lemma}{Lemmas}
\crefname{thm}{Theorem}{Theorems}
\crefname{ques}{Question}{Theorems}
\crefname{cor}{Corollary}{Corollaries}
\crefname{enumi}{Item}{Items}
\newcommand{\NN}{\mathbb{N}}
\newcommand{\PP}{\mathcal{P}}
\newcommand{\GG}{\mathcal{G}}
\newcommand{\WW}{\mathcal{W}}
\DeclarePairedDelimiter{\abs}{\lvert}{\rvert}
\DeclarePairedDelimiter{\set}{\{}{\}} 
\newtheorem{thm}{Theorem}
\newtheorem{lem}[thm]{Lemma}
\newtheorem{cor}[thm]{Corollary}
\author[Robert Hickingbotham]{Robert Hickingbotham\affiliationmark{1}}
\title[Cop-Width, Flip-Width and Strong Colouring Numbers]{Cop-Width, Flip-Width and Strong Colouring Numbers}
\affiliation{
	CNRS, ENS de Lyon, Université Claude Bernard Lyon 1, LIP, UMR 5668,
	Lyon, France}
\keywords{cop-width, flip-width, colouring number, bounded expansion, sparsity}
\begin{document}
\publicationdata{vol. 27:2}{2025}{14}{10.46298/dmtcs.14976}{2024-12-20; 2024-12-20; 2025-04-17}{2025-05-04}
\maketitle

\begin{abstract}
  \vspace{2ex} 
  Cop-width and flip-width are new families of graph parameters introduced by Toru{\'n}czyk (2023) that generalise treewidth, degeneracy, generalised colouring numbers, clique-width and twin-width. In this paper, we bound the cop-width and flip-width of a graph by its strong colouring numbers. In particular, we show that for every $r\in \NN$, every graph $G$ has $\cw_r(G)\leq \scol_{4r}(G)$. This implies that every class of graphs with linear strong colouring numbers has linear cop-width and linear flip-width. We use this result to deduce improved bounds for cop-width and flip-width for various sparse graph classes.  
\end{abstract}

\section{Introduction}    
\cite{KY2003orderings} introduced the following definitions\footnote{We consider simple, finite, undirected graphs $G$ with vertex-set ${V(G)}$ and edge-set ${E(G)}$. See \citep{diestel2017graphtheory} for graph-theoretic definitions not given here. Let ${\NN := \{1,2,\dots\}}$. For integers $a,b$ where $a\leq b$, let $[a,b]:=\{a,a+1,\dots,b-1,b\}$ and for $n \in \NN$, let $[n]:=[1,n]$. A \emph{graph class} is a collection of graphs closed under isomorphism. For a graph $G$ and a vertex $v\in V(G)$, let ${N_G(v)}:=\{w\in V(G)\colon vw\in E(G)\}$.}. For a graph $G$, a total order $\preceq$ of $V(G)$, a vertex $v\in V(G)$, and an integer $r\geq 1$, let ${R(G,\preceq,v,r)}$ be the set of vertices $w\in V(G)$ for which there is a path $(v=w_0,w_1,\dots,w_{r'}=w)$ of length $r'\in [0,r]$ such that $w\preceq v$ and $v\prec w_i$ for all $i\in [r'-1]$, and let ${Q(G,\preceq,v,r)}$ be the set of vertices $w\in V(G)$ for which there is a path $v=w_0,w_1,\dots,w_{r'}=w$ of length $r'\in [0,r]$ such that $w\preceq v$ and $w\prec w_i$ for all $i\in [r'-1]$. For a graph $G$ and integer $r\geq 1$, the \emph{$r$-strong colouring number of $G$, $\scol_r(G)$,} is the minimum integer such that there is a total order~$\preceq$ of $V(G)$ with $|R(G,\preceq,v,r)|\leq \scol_r(G)$ for every vertex $v$ of $G$. Likewise, the \emph{$r$-weak colouring number of $G$, $\wcol_r(G)$,} is the minimum integer such that there is a total order~$\preceq$ of $V(G)$ with $|Q(G,\preceq,v,r)|\leq \wcol_r(G)$ for every vertex $v$ of $G$. 

Generalised colouring numbers provide upper bounds on several graph parameters of interest. First note that $\scol_1(G)=\wcol_1(G)$ which equals the degeneracy of $G$ plus 1, implying $\chi(G)\leq \scol_1(G)$. A proper graph colouring is \emph{acyclic} if the union of any two colour classes induces a forest; that is, every cycle is assigned at least three colours. For a graph $G$, the \emph{acyclic chromatic number of $G$, $\chi_\text{a}(G)$,} is the minimum integer $k$ such that $G$ has an acyclic $k$-colouring.
\citet{KY2003orderings} proved that $\chi_\text{a}(G)\leq \scol_2(G)$ for every graph $G$. Other parameters that can be bounded by strong and weak colouring numbers include game chromatic number \citep{KT1994uncooperative,KY2003orderings}, Ramsey numbers \citep{CS1993ramsey}, oriented chromatic number \citep{KSZ1997acyclic}, arrangeability~\citep{CS1993ramsey}, boxicity \citep{EW2018boxicity}, odd chromatic number \citep{H2022odd} and conflict-free chromatic number \citep{H2022odd}.

Another attractive aspect of strong colouring numbers is that they interpolate between degeneracy and treewidth\footnote{
	A \emph{tree-decomposition} of a graph $G$ is a collection $\WW = (B_x \colon x \in V(T))$ of subsets of $V(G)$ indexed by the nodes of a tree $T$ such that
	(i) for every edge $vw \in E(G)$, there exists a node $x \in V(T)$ with $v,w \in B_x$; and 
	(ii) for every vertex $v \in V(G)$, the set $\set{x \in V(T) \colon v \in B_x}$ induces a (connected) subtree of $T$.  
	The \emph{width} of $\WW$ is $\max\set{\abs{B_x} \colon x \in V(T)} - 1$. 
	The \emph{treewidth $\tw(G)$} of a graph $G$ is the minimum width of a tree-decomposition of $G$. }. 
As previously noted, $\scol_1(G)$ equals the degeneracy of $G$ plus $1$. At the other extreme, \citet{GKRSS2018coveringsnowhere} showed that $\scol_r(G)\leq \tw(G)+1$ for every $r\in \NN$, and indeed 
$\scol_r(G) \to \tw(G)+1$ as $r\to\infty$.

Generalised colouring numbers are important because they characterise bounded expansion \citep{zhu2009generalized} and nowhere dense classes \citep{GKRSS2018coveringsnowhere}, and have several algorithmic applications \cite{dvorak2014approximation,GKS2017propertiesnowhere}.  Let $G$ be a graph and $r\geq 0$ be an integer. A graph $H$ is an \emph{$r$-shallow minor} of $G$ if $H$ can be obtained from a subgraph of $G$ by contracting disjoint subgraphs each with radius at most $r$. Let \emph{$G \,\triangledown \, r$} be the set of all {$r$-shallow-minors} of $G$, and let 
\emph{$\nabla_r(G)$}$:=\max\{|E(H)|/|V(H)|\colon H\in G \,\triangledown \, r \}$.
A hereditary graph class $\GG$ has \emph{bounded expansion} with \emph{expansion function} $f_{\GG}: \NN\cup \{0\} \to \mathbb{R}$ if $\nabla_r(G)\leq f_{\GG}(r)$ for every $r \geq 0$ and graph $G \in \GG$.
Bounded expansion is a robust measure of sparsity with many characterisations \cite{zhu2009generalized,nevsetvril2008grad,nevsetvril2012sparsity}. For example, \citet{zhu2009generalized} showed that  a hereditary graph class $\GG$ has bounded expansion if there is a function $f$ such that $\scol_r(G)\leq f(r)$ for every $r \geq 1$ and graph $G \in \GG$.
Examples of graph classes with bounded expansion include classes that have bounded maximum degree \cite{nevsetvril2008grad}, bounded stack number \cite{NOW2012examples}, bounded queue-number \cite{NOW2012examples}, bounded nonrepetitive chromatic number \cite{NOW2012examples}, strongly sublinear separators \cite{DN2016sublinear}, as well as proper minor-closed graph classes \cite{nevsetvril2008grad}. See the book by \citet{nevsetvril2012sparsity} for further background on bounded expansion.

Given the richness of generalised colouring numbers, several attempts have been made to extend these parameters to the dense setting. In a recent breakthrough, \citet{torunczyk2023flip} introduced cop-width and flip-width, new families of graph parameters that generalise treewidth, degeneracy, generalised colouring numbers, clique-width and twin-width. Their definitions are inspired by a game of cops and robber by \citet{seymour1993graph}:

\textit{“The robber stands on a vertex of the graph, and can at any time run at great speed to any other vertex along a path of the graph. He is not permitted to run through a cop, however. There are $k$ cops, each of whom at any time either stands on a vertex or is in a helicopter (that is, is temporarily removed from the game). The objective of the player controlling the movement of the cops is to land a cop via helicopters on the vertex occupied by the robber, and the robber’s objective is to elude capture. (The point of the helicopters is that cops are not constrained to move along paths of the graph – they move from vertex to vertex arbitrarily.) The robber can see the helicopter approaching its landing spot and may run to a new vertex before the helicopter actually lands”} \cite{seymour1993graph}.

\citet{seymour1993graph} showed that the least number of cops needed to win this game on a graph $G$ is in fact equal to $\tw(G)+1$, thus giving a min-max theorem for treewidth. \citet{torunczyk2023flip} introduced the following parameterised version of this game: for some fixed $r\in \NN$, the robber runs at speed $r$. So in each round, after the cops have taken off in their helicopters to their new positions (they may also choose to stay put), which are known to the robber, and before the helicopters have landed, the robber may traverse a path of length at most $r$ that does not run through a cop that remains on the ground. This variant is called the \emph{cop-width game with radius $r$ and width $k$}, if there are $k$ cops, and the robber can run at speed $r$. For a graph $G$, the \emph{radius-$r$ cop-width of $G$, $\cw_r(G)$,} is the least number $k\in \NN$ such that the cops have a winning strategy for the cop-width game played on $G$ with radius $r$ and width $k$.

Say a class of graphs $\GG$ has \emph{bounded cop-width} if there is a function $f$ such that for every $r\in \NN$ and graph $G\in \GG$, $\cw_r(G)\leq f(r)$. \citet{torunczyk2023flip} showed that bounded cop-width coincides with bounded expansion.

\begin{thm}[\cite{torunczyk2023flip}]
	A class of graphs has bounded expansion if and only if it has bounded cop-width.
\end{thm}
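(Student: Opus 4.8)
The plan is to prove both directions by relating cop-width to the strong colouring numbers, since the main theorem of this paper (stated in the abstract) gives $\cw_r(G)\leq \scol_{4r}(G)$, and the strong colouring numbers are already known to characterise bounded expansion.

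\textbf{The easy direction: bounded expansion implies bounded cop-width.}
Suppose $\GG$ has bounded expansion. By the characterisation of \citet{zhu2009generalized} quoted in the introduction, there is a function $f$ with $\scol_r(G)\leq f(r)$ for every $r\geq 1$ and every $G\in \GG$. The plan is to feed this into the paper's central inequality $\cw_r(G)\leq \scol_{4r}(G)$, giving $\cw_r(G)\leq \scol_{4r}(G)\leq f(4r)$ for every $r\in \NN$ and $G\in \GG$. Since $r\mapsto f(4r)$ is a function of $r$ alone, this exhibits the bounding function witnessing that $\GG$ has bounded cop-width.

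\textbf{The hard direction: bounded cop-width implies bounded expansion.}
For the converse I would show the contrapositive in a quantitative form: if a graph $G$ admits a dense shallow minor, then the robber can evade few cops. Concretely, I would argue that $\cw_r(G)$ bounds some sparsity measure of the $r'$-shallow minors of $G$ for a suitable $r'=O(r)$. The natural route is to relate cop-width back down to a colouring or minor-density parameter: I would try to show that $\nabla_r(G)$ (or equivalently one of the other characterisations of bounded expansion, such as $\scol$) is bounded by a function of $\cw_{r'}(G)$ for some $r'$ comparable to $r$. The standard strategy is a robber-wins argument: given a dense enough $r$-shallow minor witnessed by disjoint connected branch sets of radius at most $r$, the robber uses the high connectivity among branch sets to always find an escape path of length at most $O(r)$ avoiding the $k$ cops whenever $k$ is small relative to the minor density, thereby lower-bounding $\cw_{O(r)}(G)$ in terms of $\nabla_r(G)$.

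\textbf{Main obstacle.}
The principal difficulty lies entirely in the hard direction: translating a winning robber strategy into a density bound. The cop-width game is genuinely game-theoretic, so one must carefully design the robber's escape routes so that they respect the speed-$r$ constraint and avoid cops that have already landed, while the cops may reposition arbitrarily each round. I expect the cleanest argument to route through an intermediate parameter---most likely bounding the strong or weak colouring numbers (or $\nabla$) of $G$ below by a function of $\cw_{r'}(G)$---so that the equivalence closes up using the known sparsity characterisations. Both inequalities together establish the claimed equivalence between bounded expansion and bounded cop-width.
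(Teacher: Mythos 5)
First, note that the paper itself gives no proof of this statement: it is quoted verbatim from \citet{torunczyk2023flip}, so the only comparison available is against Toru\'nczyk's original argument and against what your write-up actually establishes. Your forward direction (bounded expansion implies bounded cop-width) is correct and complete: by \citet{zhu2009generalized} there is $f$ with $\scol_r(G)\leq f(r)$ on the class, and combining with \cref{MainStrongColouring} gives $\cw_r(G)\leq \scol_{4r}(G)\leq f(4r)$. There is no circularity here, since the proof of \cref{MainStrongColouring} in this paper is self-contained and does not use the equivalence you are proving. (Toru\'nczyk's own route for this direction goes through $\cw_r(G)\leq \wcol_{2r}(G)+1$ instead of $\scol$; both are legitimate.)

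The converse direction, however, is a genuine gap: everything you write there is conditional (``I would show'', ``I would try to show'', ``the natural route is''), and the one piece of mathematics the direction requires --- that a dense shallow minor yields a winning robber strategy against few cops at speed $O(r)$ --- is named as a goal but never carried out. To close it you would need, concretely: (i) from $\nabla_r(G)\geq d$, extract an $r$-shallow minor $H$ and then a subgraph of $H$ with minimum degree at least $d$ (a subgraph of an $r$-shallow minor is still an $r$-shallow minor); (ii) have the robber maintain the invariant that his current branch set contains no grounded cop; (iii) when $k<d$ cops announce their destinations, observe that among the at least $d$ branch sets adjacent in $H$ to the current one, some branch set will contain no cop after the helicopters land, and the robber transfers to it via a path of length at most $2r+1+2r$ (across the current branch set using the radius bound, one crossing edge, then into the target); and (iv) verify this path avoids every cop that \emph{remains} on the ground --- which holds because any blocking grounded cop would have to sit in the current branch set (empty by the invariant) or in the target (any cop there is airborne by the choice of target), and cops in helicopters do not block. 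This yields roughly $\cw_{4r+1}(G)>\nabla_r(G)$, hence bounded cop-width forces bounded expansion. These steps are exactly where the game-theoretic content lives --- the distinction between grounded and airborne cops, and the speed bookkeeping, are what make the argument work --- and acknowledging the obstacle is not the same as overcoming it. As written, your proposal proves only one of the two implications.
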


As such, only sparse graph classes have bounded cop-width. Flip-width is then defined as a dense analog of cop-width. Here, the cops have enhanced power where they are allowed to perform flips on subsets of the vertex set of the graph with the goal of isolating the robber. For a fixed graph $G$, applying a flip between a pair of sets of vertices $A, B \subseteq V(G)$ results in the graph obtained from $G$ by inverting the adjacency between any pair of vertices $a, b$ with $a \in A$ and $b \in B$. If $G$ is a graph and $\PP$ is a partition of $V(G)$, then call a graph $G'$ a \emph{$\PP$-flip} of $G$ if $G'$ can be obtained from $G$ by performing a sequence of flips between pairs of parts $A, B \in \PP$ (possibly with $A = B$). Finally, call $G'$ a \emph{$k$-flip} of $G$, if $G'$ is a $\PP$-flip of $G$, for some partition $\PP$ of $V(G)$ with $|\PP|\leq k$.

The \emph{flip-width game with radius $r\in \NN$ and width $k \in \NN$} is played on a graph $G$ by two players, flipper and runner. Initially, $G_0 = G$ and $x_0$ is a vertex of $G$ chosen by the runner. In each round $i\geq 1$ of the game, the flipper announce a new $k$-flip $G_i$ of $G$. The runner, knowing $G_i$, moves to a new vertex $x_i$ by running along a path of length at most $r$ from $x_{i-1}$ to $x_i$ in the previous graph $G_{i-1}$. The game terminates when $x_i$ is an isolated vertex in $G_i$. For a fixed $r \in \NN$, the \emph{radius-$r$ flip-width of a graph $G$, $\fw_r(G)$}, is the least number $k \in \NN$ such that the flipper has a winning strategy in the flip-width game of radius $r$ and width $k$ on $G$.

In contrast to cop-width, flip-width is well-behaved on dense graphs. For example, one can easily observe that for all $r\in \NN$, the radius-$r$ flip-width of a complete graph is equal to 1. Moreover, to demonstrate the robustness of flip-width, \citet{torunczyk2023flip} proved the following results.

\begin{thm}[\cite{torunczyk2023flip}] \,
	\begin{itemize}
		\item Every class of graphs with bounded expansion has bounded flip-width.
		\item Every class of graphs with bounded twin-width has bounded flip-width.
		\item If a class of graphs $\GG$ has bounded flip-width, then any first-order interpretation of $\GG$ also has bounded flip-width.
		\item There is a slicewise polynomial algorithm that approximates the flip-width of a given graph $G$.
	\end{itemize}
\end{thm}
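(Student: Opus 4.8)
The plan is to prove the four assertions by separate arguments, all organised around a single idea: the flipper wins whenever it can use a bounded number of flips to make the graph locally simple around the runner.

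For the first two items I would seek, at each radius $r$, a partition $\PP$ of $V(G)$ into $k=k(r)$ parts that is homogeneous enough that flipping along its ``simple'' pairs confines the runner to a region of bounded radius-$r$ reach. For bounded expansion such partitions are supplied by the low-treedepth ($p$-centred) colourings that characterise the class: there is a colouring with $f(r)$ colours in which any $r+1$ colour classes induce a graph of bounded treedepth. Treating the colour classes as the parts of $\PP$, the flipper flips between homogeneous class-pairs so that the runner's neighbourhood collapses onto a bounded-treedepth subgraph, then chases the runner down an elimination forest of that subgraph, isolating it after boundedly many rounds; this works because removing a root confines the runner to an ever-shallower subtree. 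For bounded twin-width I would instead use a $d$-contraction sequence: at any level it partitions $V(G)$ into parts each having at most $d$ non-homogeneous (``red'') neighbouring parts, so flipping the homogeneous (complete) pairs leaves, around the runner, a red graph of maximum degree $d$, whose radius-$r$ balls have bounded size. The common technical heart, and the main obstacle here, is to show that flipping the homogeneous pairs genuinely shrinks the runner's radius-$r$ reach while keeping the number of parts bounded by a function of $r$ and the sparsity or twin-width parameter; synchronising the chosen colouring or contraction level with the runner's speed $r$ is where the care lies.

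The third item is a preservation statement, and here I would normalise the interpreting formula by Gaifman's theorem into a Boolean combination of $\rho$-local formulas for a fixed locality radius $\rho$. The flipper can absorb the quantifier-free part for free, since flips are themselves quantifier-free and a Boolean combination of atomic adjacencies across a $k$-flip is realised by a $k'$-flip with $k'=k'(k)$; auxiliary unary predicates do not constrain the runner, so they cost nothing. The genuinely new phenomenon is that an interpretation may create edges between $G$-far vertices. To handle these I would colour $V(G)$ by $\rho$-ball type (boundedly many types, as the formula is fixed): edges whose endpoints have disjoint $\rho$-balls depend only on the pair of types, hence are homogeneous between type-classes and can be deleted by flips among the boundedly many classes, while the remaining edges join vertices at $G$-distance at most $2\rho$, so a radius-$r$ run in the interpretation stays within $G$-distance $2\rho r$. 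A flipper winning the radius-$2\rho r$ game on $G$ thus simulates one winning the radius-$r$ game on the interpretation, with width inflated by the number of types. The obstacle is to run this reduction inside the game, recomputing the type-colouring and its flips as the runner moves, and to bound the number of types, which uses that bounded flip-width already restricts the local structure.

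Finally, for each fixed $r$ and candidate width $k$ the flip-width game is a finite, perfect-information safety game, so its winner is in principle computed by a backward-induction (attractor) fixpoint over positions $(G_i,x_i)$, where $G_i$ is a $k$-flip of $G$ and $x_i$ is the runner's vertex. A naive state space is exponential because there are exponentially many partitions, so the work is to argue that only polynomially many flips are relevant for fixed $r$ and $k$: for instance, that two flips inducing the same adjacency on the runner's reachable ball are interchangeable, cutting the branching to $n^{g(r,k)}$ and yielding the slicewise-polynomial bound. The approximation guarantee then comes from comparing this computable game value with the exact flip-width via the same cop-simulation and monotonicity estimates used for the first two items, so that the algorithm is exact up to a function of $r$. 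Across all four parts the recurring difficulty, and the quantity one must track throughout, is the blow-up of width and radius under each reduction; keeping these finite is precisely what makes the theorem true.
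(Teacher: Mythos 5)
First, note that this theorem is quoted from Toru\'nczyk (2023) as background: the paper you are being compared against gives no proof of it at all, so the only question is whether your proposal stands on its own. It does not; it is a plan whose hard steps are exactly the ones you defer, and at least two of them fail as stated. The most concrete failure is in your argument for the first item. A flip \emph{complements} the bipartite graph between two parts, so flips only help when pairs of parts are nearly complete; in a graph from a bounded-expansion class, the bipartite graphs between colour classes of a low-treedepth ($p$-centred) colouring are sparse but nonempty, hence no flip among the colour classes can ``collapse the runner's neighbourhood onto a bounded-treedepth subgraph'' --- flipping such pairs would make the graph denser, not simpler. The route that actually works, and the one Toru\'nczyk takes (echoed inside this very paper by \cref{FlipWidthLinear}), is different: first bound cop-width via generalised colouring numbers, then let the flipper \emph{simulate the cops}, isolating the $\le k$ occupied vertices with one part per distinct neighbourhood trace on that set; this is where excluding $K_{t,t}$ (equivalently, bounded neighbourhood diversity $\pi_G(k)$) enters, giving $\fw_r(G)\le(\cw_r(G))^t$. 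Your sketch never addresses why a bounded number of parts suffices to isolate a cop position, which is the whole content of the reduction.

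Your third and fourth items have gaps of the same order. For interpretations, your type-colouring step presupposes that vertices fall into boundedly many ``$\rho$-ball types,'' but bounded flip-width does not bound the number of isomorphism types of balls (a complete graph has $\fw_r=1$ and its balls are the whole graph); Gaifman-locality reductions of this shape are a sparse-class technique, and the preservation of flip-width under interpretations is one of the main technical theorems of Toru\'nczyk's paper, proved with genuinely different machinery. For the algorithm, the assertion that ``two flips inducing the same adjacency on the runner's reachable ball are interchangeable'' is precisely the theorem you would need: without exhibiting a polynomial-size family of candidate flips (in Toru\'nczyk's work, flips whose parts are Boolean combinations of neighbourhoods of boundedly many vertices), the attractor computation ranges over exponentially many positions, and you also never address the approximation guarantee, i.e.\ how the width certified by the restricted game relates to the true flip-width. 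Your twin-width sketch is closest in spirit to the real proof, but even there the partition at a level of a contraction sequence has unboundedly many parts, and you do not say how the flipper extracts a partition of size bounded in $d$ and $r$ near the runner. In short: every one of your four ``obstacles'' is the actual theorem, not a technicality.
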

So flip-width is considered to be a good analog of generalised colouring numbers for dense graphs. See \cite{torunczyk2023flip,CKKL2023flipwidth,EM2023geometric} for further results and conjectures on flip-width.

\subsection*{Results}
In this paper, we bound the cop-width of a graph by its strong colouring numbers.

\begin{restatable}{thm}{MainStrongColouring}\label{MainStrongColouring}
	For every $r\in \NN$, every graph $G$ has $\cw_r(G)\leq \scol_{4r}(G)$.
\end{restatable}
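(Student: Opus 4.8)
The plan is to exhibit an explicit cop strategy for the radius-$r$ cop-width game driven by an optimal ordering $\preceq$ witnessing $\scol_{4r}(G)$. The guiding intuition is that the strong colouring number controls, for each vertex $v$, how many $\preceq$-small vertices are reachable from $v$ by short paths that stay $\preceq$-above $v$; this is exactly the kind of ``interface'' information a group of cops can use to fence in a robber who runs at speed $r$. So I would set up the cops to always occupy a set of the form $R(G,\preceq,v,s)$ (or a bounded union of such sets) for a suitable radius $s$ comparable to $4r$, where $v$ tracks the current ``frontier'' of the robber's region.

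The key steps, in order, would be as follows. First I would formalise a monovariant: after each round the cops confine the robber to the set of vertices lying $\preceq$-strictly above some current pivot vertex $v$, i.e. to a ``$\preceq$-up-closed'' region $U_v := \set{w : v \prec w}$ intersected with the robber's connected component in $G$ minus the cops. Second, I would argue that placing cops on $R(G,\preceq,v,2r)$ separates $U_v$ into pieces, each of which has a new pivot $v'$ with $v \prec v'$; because the robber moves at speed $r$ and the relevant reachability sets are defined with a generous radius (this is where the factor $4r$ rather than $r$ or $2r$ enters, to absorb both the robber's current move of length $r$ and the bookkeeping needed to re-anchor at $v'$), the robber is forced into one such piece. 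Third, I would show the pivot strictly increases in $\preceq$ each round, so the process terminates; when the region shrinks to a single vertex the robber is caught. Throughout, the number of cops used is bounded by $\max_v \abs{R(G,\preceq,v,4r)} \le \scol_{4r}(G)$, giving the stated bound.

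The main obstacle, I expect, is getting the radius bookkeeping exactly right so that a robber-path of length at most $r$ cannot escape the current barrier while keeping the cop count inside a single reachability set $R(G,\preceq,\cdot,4r)$ rather than a union of several (which would inflate the bound by a constant factor). Concretely, after the cops reposition onto a barrier associated with the new pivot $v'$, the robber makes one more move of length $r$ before the new cops ``land'', so the separator must account for paths of length up to $2r$ emanating from the robber's side, and re-expressing these $\preceq$-above-$v'$ paths in terms of paths that are $\preceq$-above the old pivot $v$ costs another additive $2r$ in the radius. Verifying that every such escape path is genuinely blocked — that is, that any path witnessing reachability and staying above $v'$ either hits the barrier or certifies membership in the correct $R(G,\preceq,v',4r)$ — is the delicate combinatorial core, and I would handle it by a careful case analysis on where along an escape path the order $\preceq$ attains its minimum.
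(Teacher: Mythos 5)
Your overall plan---fix an ordering witnessing $\scol_{4r}(G)$, confine the robber above a pivot that advances along the order, maintain a barrier of cops on vertices lying below the pivot, and account for the radius as $2r+2r=4r$---is the same strategy as the paper's proof, but as written it contains one outright error and leaves the central lemma unproved. The error is the choice of barrier. The set $R(G,\preceq,v,2r)$ is anchored at the pivot $v$: by definition it consists of vertices reachable from $v$ itself, all of which lie $\preceq v$ and hence outside the region $U_v$ you want to control, so it cannot ``separate $U_v$ into pieces''. Worse, it does not even confine the robber: a robber standing at a vertex $x$ far from $v$ may run along a path of length at most $r$ whose internal vertices stay strictly above $v$ and whose endpoint lies below $v$, without ever coming near a vertex that is reachable from $v$ within distance $2r$; nothing in $R(G,\preceq,v,2r)$ blocks this escape. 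The barrier must be anchored at the robber, not at the pivot. The paper's cop set is $M(v,x,2r)$: the set of vertices $w\preceq v$ joined to the robber's current position $x$ by a path of length at most $2r$ all of whose internal vertices lie strictly above the pivot $v$. That set genuinely meets every path of length at most $2r$ from the robber into the region at or below the pivot.

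Once the barrier is anchored at the robber, the real difficulty appears---and it is exactly the point your proposal defers as ``the delicate combinatorial core''. The set $M(v,x,2r)$ is not of the form $R(G,\preceq,\cdot,\cdot)$ at all: its paths are required to stay above the pivot rather than above their own low endpoint, and they emanate from the robber, which sits high in the order, so no strong colouring number bounds its size on its face. The paper's Claim supplies the missing re-anchoring argument: let $v_k$ be the $\preceq$-least vertex, not below the pivot, that is weakly reachable from $x$ within distance $2r$ (i.e.\ the least such element of $Q(G,\preceq,x,2r)$); minimality of $v_k$ forces every barrier path to stay at or above $v_k$, and concatenating the $(x,v_k)$-path with any barrier path yields a walk of length at most $4r$ from $v_k$ staying above $v_k$. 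Hence the entire barrier lies in the single set $R(G,\preceq,v_k,4r)$ and has size at most $\scol_{4r}(G)$. Note that your bookkeeping also points the wrong way: since $v\prec v'$, paths above the new pivot are automatically above the old pivot at no cost; the additive $2r$ is paid for re-anchoring the barrier at the low vertex $v_k$, not for translating between pivots. Without this re-anchoring step, your argument either fails to bound the number of cops at all, or falls back on the union-of-several-reachability-sets route, which (as you yourself note) would not give the stated bound $\cw_r(G)\leq\scol_{4r}(G)$---and even that weaker fallback is not established by the sketch.
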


Previously, the best known bounds for the cop-width of a sparse graph was through its weak-colouring numbers. \citet{torunczyk2023flip} showed that for every $r\in \NN$, every graph $G$ has 
$$\cw_r(G)\leq \wcol_{2r}(G)+1.$$ 
Moreover, if $G$ excludes $K_{t,t}$ as a subgraph, then $\fw_r(G)\leq (\cw_r(G))^t.$ While graph classes with bounded strong colouring numbers have bounded weak colouring numbers, strong colouring numbers often give much better bounds than weak colouring numbers. In fact, \citet{GKRSS2018coveringsnowhere} and \citet{DPTY2022weak} have both shown that there are graph classes with polynomial strong colouring numbers and exponential weak colouring numbers.


\begin{restatable}{thm}{LinearStrongCopFlip}\label{LinearStrongCopFlip}
	Every class of graphs with linear cop-width has linear flip-width.
\end{restatable}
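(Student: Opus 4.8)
The plan is to prove the sharper quantitative statement that there is a constant $c$ depending only on the class $\GG$ such that $\fw_r(G)\le (1+c)\,\cw_r(G)$ for every $G\in \GG$ and every $r\in\NN$; linearity of flip-width in $r$ is then immediate from linearity of cop-width. The idea is to have the flipper \emph{simulate} a winning cop strategy: in each round the flipper replaces the current set $S$ of cop positions by a flip that deletes exactly the edges incident to $S$, turning $S$ into isolated vertices while leaving the rest of the graph untouched. The runner, moving in the previous flipped graph, is then confined exactly as it would be by the cops, and when the cops would capture it the runner lands on an isolated vertex, so the flipper wins.

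First I would fix $G\in\GG$, set $k:=\cw_r(G)$, and take a winning strategy for $k$ cops in the radius-$r$ cop-width game. Since $\GG$ has linear, hence bounded, cop-width, Toruńczyk's theorem gives that $\GG$ has bounded expansion, and I would invoke the \emph{linear neighbourhood complexity} of bounded-expansion classes: there is a constant $c=c(\GG)$ so that for every $G\in\GG$ and every $A\subseteq V(G)$ the number of distinct traces $\set{N_G(v)\cap A : v\in V(G)}$ is at most $c\,\abs{A}$. This is the crucial quantitative input. The naive simulation partitions $V(G)\setminus S$ according to its trace on $S$, and for a general $K_{t,t}$-subgraph-free graph the Sauer–Shelah bound only yields $\OO(\abs{S}^{t})$ traces, reproving the polynomial estimate $\fw_r(G)\le(\cw_r(G))^t$; neighbourhood complexity improves this to $\OO(\abs{S})$, which is exactly what upgrades the conclusion from polynomial to linear.

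Next I would describe the flip explicitly. Given cop positions $S$ with $\abs{S}\le k$, take the partition $\PP$ whose parts are the singletons $\set{s}$ for $s\in S$ (at most $k$ parts) together with the nonempty trace classes $P_T:=\set{v\in V(G)\setminus S : N_G(v)\cap S=T}$ over distinct $T\subseteq S$ (at most $c\,k$ parts, by neighbourhood complexity), so $\abs{\PP}\le (1+c)k$. Flipping each $P_T$ against each singleton $\set{s}$ with $s\in T$, and flipping the singletons of $S$ against one another, deletes precisely the edges meeting $S$ and leaves every edge inside $V(G)\setminus S$ intact; hence the resulting $\PP$-flip $G'$ satisfies that each vertex of $S$ is isolated in $G'$ and $G'-S=G-S$. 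I would let the flipper play this flip in response to the cop positions prescribed by the cop strategy, which it can compute since it sees the runner's position.

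Finally I would verify the game correspondence. In the previous round's flipped graph the previous cop set was isolated, so the runner's path of length at most $r$ avoids it entirely; this is a \emph{more} restrictive move than in the cop-width game, where the robber need only avoid the cops that stay on the ground during the transition, so every runner play projects to a legal robber play. Thus the simulated play is a genuine play of the cop-width game, the fixed cop strategy forces the robber onto a cop vertex after finitely many rounds, and at that moment the runner sits on a vertex of $S$, which is isolated in the current flip, so the flipper wins. This yields $\fw_r(G)\le (1+c)\,\cw_r(G)$, and linear cop-width gives linear flip-width. I expect the main obstacle to be the quantitative one, namely securing a \emph{linear} rather than polynomial number of parts, which is precisely where linear neighbourhood complexity of bounded-expansion classes is indispensable; a secondary point requiring care is the exact matching of the two games' movement rules (and the initial round), together with confirming that the flipper may respond to the runner's position.
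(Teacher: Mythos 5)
Your proposal is correct and takes essentially the same approach as the paper: the paper proves the same key lemma (a cop position set $S$ with $|S|\le k$ can be isolated, leaving $G-S$ untouched, by a flip whose partition consists of singletons on $S$ together with the neighbourhood-trace classes of $V(G)\setminus S$, giving $\fw_r(G)\le \pi_G(k)+k$), and then invokes the same result of \citet{RVS19} that bounded-expansion classes have linear neighbourhood complexity, with bounded expansion deduced from Toru\'nczyk's theorem exactly as you do. The only difference is cosmetic: you spell out the game simulation (legality of the projected robber moves and the matching of winning conditions) in more detail than the paper, which compresses this into a single sentence.
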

It therefore follows that that every graph class with linear strong colouring numbers has linear cop-width and linear flip-width.

Second, \cref{MainStrongColouring} gives improved bounds for the cop-width of many well-studied sparse graphs. A graph $H$ is a \emph{minor} of a graph $G$ if $H$ is isomorphic to a graph that can be obtained from a subgraph of $G$ by contracting edges. A graph $G$ is \emph{$H$-minor-free} if $H$ is not a minor of $G$. Van den Heuvel, Ossona de Mendez, Quiroz, Rabinovich and Siebertz~\cite{HMQRS2017fixed} showed that for every $r\in \NN$, every $K_t$-minor-free graph $G$ has $\scol_r(G)\leq \binom{t-1}{2}(2r+1)$. So \cref{MainStrongColouring} implies the following.

\begin{restatable}{thm}{KtMain}\label{KtMain}
	For all $r,t\in \NN$, every $K_t$-minor-free graph $G$ has
	$$\cw_r(G)\leq \binom{t-1}{2}(8r+1).$$
\end{restatable}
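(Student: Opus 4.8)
The plan is to derive \cref{KtMain} as an immediate corollary of \cref{MainStrongColouring} together with the bound of Van den Heuvel, Ossona de Mendez, Quiroz, Rabinovich and Siebertz~\cite{HMQRS2017fixed} on the strong colouring numbers of $K_t$-minor-free graphs. No new combinatorial work is required; the only content is to chain two known inequalities with the correct radius substitution.

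First I would invoke \cref{MainStrongColouring} to obtain, for every $r\in\NN$ and every graph $G$, the inequality $\cw_r(G)\leq\scol_{4r}(G)$. This reduces the problem of bounding cop-width to bounding the strong colouring number at radius $4r$.

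Next I would apply the cited bound $\scol_r(G)\leq\binom{t-1}{2}(2r+1)$, which holds for every $r\in\NN$ whenever $G$ is $K_t$-minor-free. The key (and only) step is to apply it at radius $4r$ rather than $r$: substituting $4r$ for $r$ gives
$$\scol_{4r}(G)\leq\binom{t-1}{2}\bigl(2(4r)+1\bigr)=\binom{t-1}{2}(8r+1).$$
Combining this with the inequality from the previous step yields $\cw_r(G)\leq\binom{t-1}{2}(8r+1)$, as claimed.

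Since the argument is purely a composition of two off-the-shelf inequalities, there is no real obstacle to overcome here. The substantive work lives entirely in \cref{MainStrongColouring}; the factor of $4$ in its radius is precisely what forces the radius $4r$ in the colouring-number bound and hence the $8r+1$ (rather than $2r+1$) in the final estimate.
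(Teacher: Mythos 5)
Your proposal is correct and is exactly the paper's argument: the paper states \cref{KtMain} as an immediate consequence of \cref{MainStrongColouring} combined with the bound $\scol_r(G)\leq\binom{t-1}{2}(2r+1)$ of van den Heuvel et al.~\cite{HMQRS2017fixed} applied at radius $4r$, yielding $\binom{t-1}{2}(8r+1)$. Nothing is missing.
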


By \cref{LinearStrongCopFlip}, $K_t$-minor-free graphs also have linear flip-width; see \cref{KtMainFlipWidth} for an explicit bound. In regard to the previous best known bound for this class of graphs, van~den~Heuvel~et~al.~\cite{HMQRS2017fixed} showed that for every $r\in \NN$, every $K_t$-minor-free graph $G$ has $\wcol_r(G)\in O_t(r^t)$. By the aforementioned result of \citet{torunczyk2023flip}, the previous best known bound for the cop-width and flip-width of $K_t$-minor-free graph $G$ was:
\begin{equation*}
	\cw_r(G) \in O_t(r^{t-1})\text{ and } \fw_r(G) \in O_t(r^{(t-1)^2}).
\end{equation*}

\cref{MainStrongColouring} is also applicable to non-minor-closed graph classes. For a surface $\Sigma$, we say that a graph $G$ is \emph{$(\Sigma, k)$-planar} if $G$ has a drawing on $\Sigma$ such that every edge of $G$ is involved in at most $k$ crossings. A graph is \emph{$(g,k)$-planar} if it is $(\Sigma, k)$-planar for some surface $\Sigma$ with Euler genus at most $g$. Such graphs are widely studied \citep{PT1997crossings,DMW20,DEW2017locally,dujmovic2017layered} and are a classic example of a sparse non-minor-closed class of graphs.

Van den Heuvel and Wood~\cite{HW2018improperARXIV,HW2018improper} showed that for every $r\in \NN$, every $(g,k)$-planar graph $G$ has $\scol_r(G)\leq (4g+6)(k+1)(2r+1)$. So \cref{MainStrongColouring} implies the following.

\begin{thm}\label{cwgkplanar}
	For all $g,k,r\in \NN$, every $(g,k)$-planar graph $G$ has $$\cw_r(G)\leq (4g+6)(k+1)(8r+1).$$
\end{thm}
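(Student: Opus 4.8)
The plan is simply to chain together \cref{MainStrongColouring} with the bound on strong colouring numbers of $(g,k)$-planar graphs due to van~den~Heuvel and Wood, so essentially no new work is required beyond a single parameter substitution. Let $G$ be a $(g,k)$-planar graph and fix $r\in\NN$. First I would invoke \cref{MainStrongColouring}, which gives $\cw_r(G)\leq\scol_{4r}(G)$; this reduces the entire task to bounding the $4r$-strong colouring number of $G$.

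Second, I would apply the stated result of van~den~Heuvel and Wood, namely that every $(g,k)$-planar graph $H$ satisfies $\scol_s(H)\leq(4g+6)(k+1)(2s+1)$ for every $s\in\NN$. Taking $s=4r$ yields $\scol_{4r}(G)\leq(4g+6)(k+1)(8r+1)$, since $2\cdot 4r+1=8r+1$. Combining this with the previous inequality gives $\cw_r(G)\leq(4g+6)(k+1)(8r+1)$, as claimed.

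There is no genuine obstacle here: the statement is a direct corollary, and all the substantive content lives in \cref{MainStrongColouring} (proved separately in this paper) and in the external colouring-number bound. The only point to be careful about is that the exponent $4$ inside the strong colouring number forces the argument $4r$ rather than $r$, which is precisely what converts the factor $(2r+1)$ appearing in the van~den~Heuvel--Wood bound into the factor $(8r+1)$ in the conclusion.
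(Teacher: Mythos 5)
Your proposal is correct and matches the paper's own derivation exactly: the theorem is stated as an immediate consequence of \cref{MainStrongColouring} combined with the van den Heuvel--Wood bound $\scol_s(G)\leq(4g+6)(k+1)(2s+1)$ evaluated at $s=4r$. Nothing further is needed.
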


See \cite{HW2021shallow,HW2018improperARXIV,HMQRS2017fixed,DMN2021convex} for other graph classes that \cref{MainStrongColouring} applies to.

\section{Proofs}
We now prove our main theorem.

\MainStrongColouring*

\begin{proof}
	Let $n:=|V(G)|-1$ and let $(v_0,v_1,\dots,v_n)$ be a total order $\preceq$ of $V(G)$ where $|R(G,\preceq,v,4r)|\leq \scol_{4r}(G)$ for every $v\in V(G)$. For every $s\in \NN$ and $v_i,v_j\in V(G)$ where $i\leq j$, let ${M(v_i,v_j,s)}$ be the set of vertices $w\in V(G)$ for which there is a path $v_j=w_0,w_1,\dots,w_{s'}=w$ of length $s'\in [0,s]$ such that $w\preceq v_i$ and $v_i\prec w_{\ell}$ for all $\ell\in [s'-1]$. 
	
	\textbf{Claim:} For all $v_i,v_j\in V(G)$ where $i\leq j$, $|M(v_i,v_j,2r)|\leq \scol_{4r}(G)$.
	
	\begin{proof}
		Let $k\in [i,j]$ be minimal such that $v_k\in Q(G,\preceq,v_j,2r)$. So $G$ contains a path $P=(v_j=u_0,\dots,u_{r'}=v_k)$ of length $r'\in [0,2r]$ such that $v_k\prec u_{\ell}$ for all $\ell\in [r'-1]$. We claim that $M(v_i,v_j,2r)\subseteq R(G,\preceq,v_k,4r)$. Let $w \in M(v_i,v_j,2r)$. Then there is a path $P'=(v_j=w_0,\dots,w_{s'}=w)$ of length $s'\in [0,2r]$ such that $w \preceq v_{i}$ and $v_i\prec w_{\ell}$ for all $\ell \in [s'-1]$. Suppose there is an $\ell \in [s'-1]$ such that $w_{\ell} \prec v_k$. Choose $\ell$ to be minimum. Then $w_{\ell}\in Q(G,\preceq,v_j,2r)$ since $w_{\ell}\prec v_k \preceq w_a$ for each $a\in [0,\ell-1]$, which contradicts the choice of $k$. So $v_k\preceq w_{\ell}$ for all $\ell \in [s'-1]$. By taking the union of $P$ and $P'$, it follows that $G$ contains a $(v_k,w)$-walk $W$ of length at most $4r$ such that $v_k \prec u$ for all $u\in V(W)\setminus\{v_k,w\}$. Therefore $w\in R(G,\preceq,v_k,4r)$ and so $|M(v_i,v_j,2r)|\leq |R(G,\preceq,v_k,4r)|\leq \scol_{4r}(G).$
	\end{proof}
	
	For each round $i\geq 0$ until the robber is caught, we will define a tuple $(v_i,x_i,C_i,D_i,V_i,P_i)$ where: 
	
	\begin{enumerate}[(1)]
		\item\label{A1} $x_i\in V(G)$ is the location of the robber at the end of round $i$;
		\item\label{A4} $C_0:=\{v_0\}$ and $C_i:=M(v_i,x_{i-1},2r)$ is the set of vertices that the cops are on at the end of round $i$ for each $i\geq 1$;
		\item\label{A5} $D_0:=\emptyset$ and $D_i:=C_{i-1}\cap C_i$ is the set of vertices where cops remain put throughout round $i$ for each $i\geq 1$; 
		\item\label{A7} $V_i:=\{v_0,\dots,v_i\}$ where $v_i$ is defined by the total order $\preceq$ of $V(G)$; and
		\item\label{A6} $P_0:=\emptyset$ and $P_i$ is the $(x_{i-1},x_i)$-path of length at most $r$ that the robber traverses during round $i$ for each $i\geq 1$.
	\end{enumerate}
	
	Note that we require $v_i\preceq x_{i-1}$ in order for $M(v_i,x_{i-1},2r)$ to be well-defined. We will prove that the tuple is indeed well-defined by inductively maintain the following invariants for each round $i\geq 0$: 
	
	\begin{enumerate}[(1)]
		\setcounter{enumi}{5}
		\item\label{Inv1} $v_i \preceq x_i$;
		\item\label{Inv2} if $i\geq 1$, every path in $G$ from $x_{i-1}$ to a vertex in $V_{i-1}$ of length at most $r$ contains a vertex from $D_{i}$;
		\item\label{Inv3} $M(v_i,x_{i},r)\subseteq C_i$;
		\item\label{Inv4} if $i\geq 1$, then $V(P_i)\cap V_{i-1}=\emptyset$; and
		\item\label{Inv5} if $v_i=x_i$, then the robber is caught.
	\end{enumerate}
	
	
	
	
	Together with the previous claim, \labelcref{A4,Inv1,Inv5} imply that the robber is caught within $n$ rounds using at most $\scol_{4r}(G)$ cops.
	
	Initialise the game of cops and robber with the robber on some vertex $x_0$ in $V(G)$, one cop on $v_0$ and the remaining cops all in the helicopters. Define the tuple $(v_0,x_0,C_0,D_0,V_0,P_0)$ according to \labelcref{A1,A4,A5,A7,A6}. Clearly such a tuple is well-defined. Moreover, it is easy to see that the tuple satisfies \labelcref{Inv1,Inv2,Inv3,Inv4,Inv5}.  
	
	Now suppose we are at round $i \geq 1$ and the robber has not yet been caught. By induction, we may assume that there is a tuple $(v_{i-1},x_{i-1},C_{i-1},D_{i-1},V_{i-1},P_{i-1})$ for round $i-1$ which satisfies \labelcref{A1,A4,A5,A7,A6,Inv1,Inv2,Inv3,Inv4,Inv5}. Since the robber has not yet been caught, \labelcref{Inv1,Inv5} imply that $v_{i-1}\prec x_{i-1}$, so $v_i \preceq x_{i-1}$. Therefore, there is a well-defined tuple $(v_i,x_i,C_i,D_i,V_i,P_i)$ which satisfies \labelcref{A1,A4,A5,A7,A6}. We now show that $(v_i,x_i,C_i,D_i,V_i,P_i)$ satisfies the additional invariants.
	
	We first verify \labelcref{Inv2}. Let $F_{i}:=M(v_{i-1},x_{i-1},r)$. Let $u\in V_{i-1}$ and suppose there is a  path $P^{\star}= (x_{i-1}=w_0,w_1,\dots,w_{r'}=u)$ in $G$ where $r'\in [0,r]$. Consider the minimal $j\in [r']$ such that $w_j\in V_{i-1}$. Since $\{w_1,\dots,w_{j-1}\}\cap V_{i-1}=\emptyset$, it follows that $w_j\in F_i$. So for every $u\in V_{i-1}$, every $(x_{i-1},u)$-path in $G$ of length at most $r$ contains a vertex from $F_i$. By \labelcref{A4} and \labelcref{Inv3} (from the $i-1$ case), it follows that $F_i\subseteq C_{i-1}\cap C_i$. So \labelcref{Inv2} follows from \labelcref{A5}. Now since the robber is not allowed to run through a cop that stays put, \labelcref{Inv4} follows by \labelcref{A5,A6,Inv2}. Property~\labelcref{Inv1} then immediately follows from \labelcref{Inv4} since $x_i\in V(P_i)$. Now consider a vertex $y\in M(v_i,x_i,r)$. Then $G$ contains a path $P'=(x_i=w_0,w_1,\dots,w_{r'}=y)$ of length $r'\in [0,r]$ such that $v_i\preceq w_j$ for all $j\in [r'-1]$. By taking the union of $P'$ and $P_i$, it follows that $G$ contains an $(x_{i-1},y)$-walk $W$ of length at most $2r$. Moreover, by \labelcref{Inv2} and the definition of $P'$, $v_i\preceq z$ for all $z\in V(W)\setminus\{x_{i-1},y\}$. So $y \in M(v_i,x_{i-1},2r)$ and thus \labelcref{A4} implies \labelcref{Inv3}. Finally, if $v_i=x_i$ then \labelcref{Inv3} implies $x_i\in C_i$, so \labelcref{Inv5} follows by \labelcref{A1,A4}, as required.
\end{proof}

To prove \cref{LinearStrongCopFlip}, we leverage known results concerning neighbourhood diversity. Neighbourhood diversity is a well-studied concept with various applications \cite{RVS19,EGKKPRS17,GHOORRVS17,PP20,BKW2022bandwidth,BFLP2024neighbourhood,JR2023neighborhood}. Let $G$ be a graph. For a set $S\subseteq V(G)$, let ${\pi_G(S)}:=|\{N_G(v)\cap S \colon v\in V(G)\setminus S\}|$. For $k\in \NN$, let ${{\pi_G(k)}:=\max\{\pi_G(S)\colon S\subseteq V(G), |S|\leq k\}}$.

\begin{lem}\label{FlipWidthLinear}
	For all $k,r\in \NN$, every graph $G$ with $\cw_r(G)\leq k$ has 
	$${\fw_r(G)\leq \pi_G(k)+k}.$$
\end{lem}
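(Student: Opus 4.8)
The plan is to convert a winning cop strategy for the radius-$r$ cop-width game with $k$ cops into a winning flipper strategy for the radius-$r$ flip-width game with $\pi_G(k)+k$ flips, by having the flipper simulate the cops. The central device is a flip that, given a cop set $C\subseteq V(G)$ with $\abs{C}\le k$, turns $G$ into a graph $G_C$ in which every vertex of $C$ is isolated while every edge with both endpoints outside $C$ is preserved. The point is that running along a path in such a $G_C$ is exactly running in $G$ along a path that avoids $C$, so $G_C$ simulates cops standing on $C$, and a runner sitting on a vertex of $C$ is isolated.

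To realise $G_C$ as a flip, I would partition $V(G)$ as follows: each vertex of $C$ forms a singleton part (at most $k$ parts), and the vertices of $V(G)\setminus C$ are grouped according to their trace $N_G(v)\cap C$ on $C$ (at most $\pi_G(C)\le\pi_G(k)$ parts, by definition of $\pi_G$ and monotonicity in $\abs{C}\le k$). Thus the partition $\PP$ has $\abs{\PP}\le \pi_G(k)+k$. Now flip every pair of singletons $\set{s},\set{s'}$ that is an edge of $G$, and, for each trace-class $T$ with trace $A\subseteq C$ and each $s\in A$, flip $\set{s}$ against $T$. Since all vertices of a trace-class share the same neighbourhood in $C$, these flips delete exactly the edges incident to $C$ and touch no edge inside $V(G)\setminus C$; hence $G_C$ is the desired $\PP$-flip and is a $(\pi_G(k)+k)$-flip of $G$.

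With this gadget in hand the flipper plays as follows. Fix a winning cop strategy, which we may assume begins with all cops in helicopters ($C_0=\emptyset$; this is without loss of generality, and it matches the forced opening $G_0=G=G_{\emptyset}$ of the flip-width game). The flipper maintains a virtual robber whose position equals the runner's position, feeds the runner's moves to the cop strategy, and in round $i$ announces $G_i:=G_{C_i}$, where $C_i$ is the cops' response. Two verifications are needed: (i) every runner move is a legal robber move, and (ii) capture yields a flipper win. For (i), as long as the game continues the runner's vertex $x_{i-1}$ is not isolated in $G_{i-1}=G_{C_{i-1}}$, so $x_{i-1}\notin C_{i-1}$; since the vertices of $C_{i-1}$ are isolated in $G_{i-1}$, the runner's path of length at most $r$ in $G_{i-1}$ avoids $C_{i-1}$ and uses only $G$-edges, so it is a path of length at most $r$ in $G$ avoiding $C_{i-1}\supseteq C_{i-1}\cap C_i$, i.e.\ a legal robber move past the grounded cops. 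For (ii), if the cops ever catch the virtual robber, i.e.\ $x_i\in C_i$, then $x_i$ is isolated in $G_i=G_{C_i}$, so the flipper wins. As the cop strategy catches every robber in finitely many rounds and the runner is, round by round, a legal robber, the flipper wins.

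I expect the main obstacle to be getting the flip gadget exactly right: $G_C$ must simultaneously isolate $C$ (so that capture forces isolation) and leave the rest of $G$ untouched (so that the runner cannot exploit spurious new edges to escape), and the neighbourhood-trace partition is precisely what makes this achievable with only $\pi_G(k)+k$ parts. A secondary point requiring care is the bookkeeping of the grounded cops $C_{i-1}\cap C_i$ against the set $C_{i-1}$ that $G_{i-1}$ actually isolates: isolating the larger set $C_{i-1}$ only further constrains the runner, which is harmless for the flipper, but one must check (as above) that the resulting runner moves stay legal robber moves and that the forced first graph $G_0=G$ is correctly matched to the empty initial cop set.
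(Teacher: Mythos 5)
Your proposal is correct and follows essentially the same approach as the paper: the same partition of $V(G)$ into singletons on the cop set plus neighbourhood-trace classes, the same flips (each cop-vertex against the parts fully adjacent to it) to isolate the cop set while leaving the rest of $G$ untouched, and the same round-by-round simulation of a winning cop strategy by the flipper. Your write-up is in fact more detailed than the paper's, which leaves the legality-of-runner-moves and capture-implies-isolation verifications implicit.
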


\begin{proof}
	We claim that for every set $S\subseteq V(G)$ where $|S|\leq k$, there is a $(\pi_G(k)+k)$-flip that isolates $S$ while leaving $G-S$ untouched. Let $\mathcal{P}$ be a partition of $V(G)$ that partition $S$ into singleton and vertices in $v\in V(G)\setminus S$ according to $N_G(v)\cap S$. Then $|\mathcal{P}|\leq \pi_G(k)+k$. Moreover, every vertex $s\in S$ can be isolated by flipping $\{s\}$ with every part of $\mathcal{P}$ that is complete to $\{s\}$. Thus, a winning strategy for the cops in the cop-width game with radius $r$ and width $k$ can be transformed into a winning strategy for the flipper in the flip-width graph with radius $r$ and width $\pi_G(k)+k$, as required. 
\end{proof}

\citet{RVS19} showed that for every graph class $\GG$ with bounded expansion, there exists $c>0$ such that $\pi_G(k)\leq ck$ for every $G\in \mathcal{G}$. Since graph classes with linear cop-width have bounded expansion, \cref{FlipWidthLinear} imply \cref{LinearStrongCopFlip}. As a concrete example, \citet{BKW2022bandwidth} showed that for every $K_t$-minor-free graph $G$ and for every set $A\subseteq V(G)$, 
$$\pi_G(A)\leq 3^{2t/3+o(t)}|A|+1.$$
So \cref{KtMain,FlipWidthLinear} imply the following.

\begin{cor}\label{KtMainFlipWidth}
	For all $r,t\in \NN$, every $K_t$-minor-free graph $G$ has
	$$\fw_r(G)\leq 3^{(2t/3+o(t))}t^2r.$$
\end{cor}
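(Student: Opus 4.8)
The plan is to chain together three ingredients already established in the excerpt: the cop-width bound of \cref{KtMain}, the cop-width-to-flip-width transfer of \cref{FlipWidthLinear}, and the neighbourhood-diversity estimate of \citet{BKW2022bandwidth}. First I would set $k:=\binom{t-1}{2}(8r+1)$, so that \cref{KtMain} gives $\cw_r(G)\leq k$ for every $K_t$-minor-free graph $G$. Applying \cref{FlipWidthLinear} then immediately yields $\fw_r(G)\leq \pi_G(k)+k$, which reduces the whole problem to bounding the single quantity $\pi_G(k)$.

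Next I would estimate $\pi_G(k)$ using the bound $\pi_G(A)\leq 3^{2t/3+o(t)}|A|+1$, valid for every $A\subseteq V(G)$ when $G$ is $K_t$-minor-free. Since $\pi_G(k)=\max\{\pi_G(S)\colon S\subseteq V(G),\,|S|\leq k\}$ and the right-hand side of that estimate is monotone in $|A|$, taking $|A|\leq k$ gives $\pi_G(k)\leq 3^{2t/3+o(t)}k+1$. Substituting back produces $\fw_r(G)\leq 3^{2t/3+o(t)}k+k+1$.

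It then remains to simplify this expression in terms of $r$ and $t$. Using $\binom{t-1}{2}\leq \tfrac{t^2}{2}$ gives $k\leq 4t^2r+\tfrac{t^2}{2}$, so the dominant contribution is $3^{2t/3+o(t)}\cdot 4t^2r$, while the remaining summands (the $k$, the additive $1$, and the $\tfrac{t^2}{2}$ term, which carries no factor of $r$) are each at most an absolute constant times $t^2r$, using $r,t\geq 1$. The one step that genuinely requires care — and the only real obstacle here — is verifying that the multiplicative constant $4$, together with any other absolute constants coming from the lower-order terms, is absorbed into the exponential factor rather than spoiling the stated form. This is where the $o(t)$ in the exponent does the work: since $4=3^{\log_3 4}$ and $\log_3 4$ is an absolute constant, one has $4\cdot 3^{2t/3+o(t)}=3^{2t/3+\log_3 4+o(t)}=3^{2t/3+o(t)}$, and likewise every constant factor disappears into the $o(t)$. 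Collecting the terms then yields $\fw_r(G)\leq 3^{(2t/3+o(t))}t^2r$, as required.
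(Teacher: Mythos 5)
Your proposal is correct and follows exactly the paper's route: it chains \cref{KtMain}, \cref{FlipWidthLinear}, and the neighbourhood-diversity bound of \citet{BKW2022bandwidth}, then absorbs the constant factors and lower-order terms into the $3^{o(t)}$ factor. The paper leaves this final arithmetic implicit, so your explicit verification that the multiplicative constants disappear into the $o(t)$ exponent is a faithful (and slightly more careful) rendering of the same argument.
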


\subsubsection*{Acknowledgement} This work was initiated at the 10th Annual Workshop on Geometry and Graphs held at Bellairs Research Institute in February 2023. Thanks to Rose McCarty for introducing the author to flip-width and to David Wood for helpful conversations. The author was supported by an Australian Government Research Training Program Scholarship when he completed this work.
%
\bibliographystyle{DavidNatbibStyle}
\bibliography{main-journal.bbl}
\label{sec:biblio}


\end{document}